\newtheorem{theorem}{Theorem}[section]
\newtheorem{lemma}[theorem]{Lemma}
\newtheorem*{definition*}{Definition}
\newcommand\ZZ{\mathbb Z}
\newcommand\NN{\mathbb N}
\newcommand\FF{\mathbb F}
\newcommand{\GL}{\ensuremath{\operatorname{GL}}}
\newcommand\floor[1]{\left\lfloor #1\right\rfloor}
\renewcommand\vec{\underline}
\begin{document}
\title{A structure theorem for product sets in extra special groups}

\author{
Thang Pham\thanks{Department of Mathematics, EPFL, Lausanne, Switzerland. Email \texttt{thang.pham@epfl.ch}. Thang Pham was partially supported by Swiss National Science Foundation grants no. 200020-162884 and 200021-175977} \and
Michael Tait\thanks{Department of Mathematical Sciences, Carnegie Mellon University. Michael Tait was supported by NSF grant DMS-1606350. Email: \texttt{mtait@cmu.edu}.} \and
Le Anh Vinh\thanks{University of Education, Vietnam National University. Email: \texttt{vinhla@vnu.edu.vn}}\and
Robert Won\thanks{Department of Mathematics, Wake Forest University. Email: \texttt{wonrj@wfu.edu}}
}
\date{}
\maketitle
\begin{abstract}
Hegyv\'ari and Hennecart showed that if $B$ is a sufficiently large brick of a Heisenberg group, then the product set $B\cdot B$ contains many cosets of the center of the group.  We give a new, robust proof of this theorem that extends to all extra special groups as well as to a large family of quasigroups. 
\end{abstract}
\section{Introduction}
Let $p$ be a prime. An extra special group $G$ is a $p$-group whose center $Z$ is cyclic of order $p$ such that $G/Z$ is an elementary abelian $p$-group (nice treatments of extra special groups can be found in \cite{A, G}). The extra special groups have order $p^{2n+1}$ for some $n \geq 1$ and occur in two families. Denote by $H_n$ and $M_n$ the two non-isomorphic extra special groups of order $p^{2n+1}$. Presentations for these groups are given in \cite{D}:
\begin{align*} H_n = \langle &a_1, b_1, \dots, a_n, b_n, c \mid [a_i,a_j]=[b_i,b_j]=1, [a_i, b_j] = 1 \mbox{ for } i \neq j, \\
&[a_i, c] = [b_i, c]=1, [a_i, b_i] = c, a_i^p = b_i^p = c_i^p =1 \mbox{ for } 1 \leq i \leq n \rangle
\\ 
M_n = \langle &a_1, b_1, \dots, a_n, b_n, c \mid [a_i,a_j]=[b_i,b_j]=1, [a_i, b_j] = 1 \mbox{ for } i \neq j, \\
&[a_i, c] = [b_i, c]=1, [a_i, b_i] = c, a_i^p = c_i^p = 1, b_i^p = c \mbox{ for } 1 \leq i \leq n \rangle.
\end{align*}
From these presentations, it is not hard to see that the center of each of these groups consists of the powers of $c$ so are cyclic of order $p$. It is also clear that the quotient of both groups by their centers yield elementary abelian $p$-groups. 

In this paper we consider the structure of products of subsets of extra special groups. The structure of sum or product sets of groups and other algebraic structures has a rich history in combinatorial number theory. One seminal result is Freiman's theorem \cite{F}, which asserts that if $A$ is a subset of integers and $|A+A| = O(|A|)$, then $A$ must be a subset of a small generalized arithmetic progression. Green and Ruzsa \cite{GR} showed that the same result is true in any abelian group. On the other hand, commutativity is important as the theorem is not true for general non-abelian groups \cite{HH2}. With this in mind, Hegyv\'ari and Hennecart were motivated to study what actually can be said about the structure of product sets in non-abelian groups. 

The group $H_1$ is the classical Heisenberg group, so the groups $H_n$ form natural generalizations of the Heisenberg group.  The group $H_n$ has a well-known representation as a subgroup of $\GL_{n+2}(\FF_p)$ consisting of upper triangular matrices
\[ [\vec{x}, \vec{y}, z] :=  \begin{bmatrix} 1 & \vec{x} & z \\
0 & I_n & \vec{y} \\
0 & 0 & 1
\end{bmatrix}
\]
where $\vec{x}, \vec{y} \in \FF_p^n$, $z \in \FF_p$, and $I_n$ is the $n \times n$ identity matrix. Let $\vec{e_i} \in \FF_p^n$ be the $i \textsuperscript{th}$ standard basis vector. In the presentation for $H_n$, $a_i$ corresponds to $[\vec{e_i}, 0, 0]$, $b_i$ corresponds to $[0, \vec{e_i}, 0]$ and $c$ corresponds to $[0,0,1]$. By matrix multiplication, we have 
\[  [\vec{x}, \vec{y}, z] \cdot [\vec{x}', \vec{y}', z'] = [\vec{x} + \vec{x}', \vec{y} + \vec{y}', z + z' + \langle \vec{x},  \vec{y}' \rangle]
\]
where $\langle \, , \rangle$ denotes the usual dot product.

Let $H_n$ be a Heisenberg group.  A subset $B$ of $H_n$ is said to be a \textit{brick} if
$$
B=\{[\underline{x}, \underline{y},z]\text{ such that }\underline{x}\in\underline{X},\   \underline{y}\in\underline{Y},\ z\in Z\}
$$
where $\underline{X}=X_1\times\dots\times X_n$ and
$\underline{Y}=Y_1\times\dots\times Y_n$ with 
non empty-subsets $X_i,Y_i, Z\subseteq\mathbb{F}_p$.
\medskip

\begin{theorem}[\textbf{Hegyv\'ari-Hennecart}, \cite{HH}]\label{th*}
For every $\varepsilon>0,$ there exists a positive integer $n_0$ such that if $n\ge n_0$, $B\subseteq H_n$ is a brick and
$$
|B|>|H_n|^{3/4+\varepsilon}
$$ 
then there exists a non trivial subgroup $G'$ of $H_n$, namely its center $[\underline{0},\underline{0},\mathbb{F}_p]$, such that $B\cdot B$ contains  
at least $|B|/p$ cosets of   $G'$. 
\end{theorem}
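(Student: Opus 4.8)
The plan is to convert the statement into a question about sumsets in $\Fp$ and attack it with the Cauchy--Davenport theorem. First I would make $B\cdot B$ explicit: writing $\vec u=\vec x+\vec x'$ and $\vec v=\vec y+\vec y'$, the multiplication rule shows that the center-coset $\{[\vec u,\vec v,w]:w\in\Fp\}$ meets $B\cdot B$ exactly in the elements whose last coordinate has the form $z+z'+\langle\vec x,\vec y'\rangle$. As $B$ is a brick the constraints decouple coordinatewise: $\vec x$ runs over $\prod_i U_i$ and $\vec y'$ over $\prod_i V_i$, where $U_i:=X_i\cap(u_i-X_i)$ and $V_i:=Y_i\cap(v_i-Y_i)$. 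Hence the set of attainable last coordinates is the Minkowski sum
\[
(Z+Z)+\sum_{i=1}^{n} U_i\cdot V_i
\]
in $\Fp$, where $U_i\cdot V_i=\{ab:a\in U_i,\,b\in V_i\}$. The coset lies in $B\cdot B$ if and only if this set equals $\Fp$, so the task is to exhibit at least $|B|/p$ pairs $(\vec u,\vec v)$ for which it does.

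By Cauchy--Davenport this sum equals $\Fp$ as soon as $\sum_i(|U_i\cdot V_i|-1)\ge p-1$; in particular it suffices that at least $p-1$ coordinates be \emph{good}, meaning $|U_i\cdot V_i|\ge2$, which happens whenever $|U_i|\ge2$ and $V_i\ne\{0\}$. Now the number of $(\vec u,\vec v)$ for which every $U_i,V_i$ is nonempty --- a prerequisite for the coset to be hit at all --- equals $P:=\prod_i|X_i+X_i|\,|Y_i+Y_i|$, and Cauchy--Davenport gives $P\ge\prod_i|X_i|\,|Y_i|=|B|/|Z|\ge|B|/p$. I would model a uniformly random such ``representable'' coset by drawing each $(u_i,v_i)$ independently and uniformly from $(X_i+X_i)\times(Y_i+Y_i)$; goodness of coordinate $i$ depends only on $(u_i,v_i)$, so these events are independent, and the target reduces to showing that all but an exponentially small fraction of representable cosets have at least $p-1$ good coordinates.

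The crux, and the one place the exponent $3/4$ enters, is a lower bound for the probability that a fixed coordinate is good. A count of representations yields $\#\{u_i:|U_i|\ge2\}\ge(|X_i|^2-p)/(|X_i|-1)$, using $\sum_{u}|X_i\cap(u-X_i)|=|X_i|^2$ together with $|U_i|\le|X_i|$ and $|X_i+X_i|\le p$; once $|X_i|>p^{3/4}$ this is a proportion $\gtrsim p^{-1/2}$ of $X_i+X_i$. Combined with the elementary observation that $V_i\ne\{0\}$ for a positive proportion of $v_i\in Y_i+Y_i$ whenever $|Y_i|\ge2$, any coordinate with $\max(|X_i|,|Y_i|)>p^{3/4}$ and $\min(|X_i|,|Y_i|)\ge2$ is good with probability $\gtrsim p^{-1/2}$. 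The size hypothesis forces roughly $4\varepsilon n$ such coordinates: one failing this condition contributes at most $p^{3/2}$ to $\prod_i|X_i|\,|Y_i|$, so having more than about $(1-4\varepsilon)n$ of them would drop $|B|=|Z|\prod_i|X_i|\,|Y_i|$ below $|H_n|^{3/4+\varepsilon}$.

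Consequently the expected number of good coordinates is $\gtrsim\varepsilon n/\sqrt p$, which exceeds $p$ once $n\ge n_0(\varepsilon,p)$, and a Chernoff bound makes the probability $q$ of fewer than $p-1$ good coordinates at most $\exp(-\Omega(\varepsilon n/\sqrt p))$; hence the number of full cosets is at least $P(1-q)$. Writing $P=(|B|/p)\,\sigma$ with $\sigma=\tfrac{p}{|Z|}\prod_i\tfrac{|X_i+X_i|\,|Y_i+Y_i|}{|X_i|\,|Y_i|}\ge1$, either $\sigma>1$, in which case Cauchy--Davenport forces $\sigma\ge p/(p-1)$ and so $(1-q)\sigma\ge1$ for $n$ large, or $\sigma=1$, which forces every $|X_i|,|Y_i|\in\{1,p\}$ and $|Z|=p$, and then a strong coordinate has $|X_i|=|Y_i|=p$, giving $U_i\cdot V_i=\Fp$ on every representable coset, whence $q=0$; either way $B\cdot B$ contains at least $|B|/p$ full cosets. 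The single real obstacle is the representation-counting estimate of the third paragraph, which is exactly where the hypothesis exponent $3/4$ is consumed; the remainder is bookkeeping around Cauchy--Davenport and a Chernoff bound.
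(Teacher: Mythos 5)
Your reduction of the problem to covering $\FF_p$ by the set $(Z+Z)+\sum_i U_i\cdot V_i$ is correct, and several supporting steps are sound (the representation count $\sum_u|X_i\cap(u-X_i)|=|X_i|^2$, the identification of the $P=\prod_i|X_i+X_i|\,|Y_i+Y_i|$ representable cosets, the $\sigma$ endgame). The genuine gap is quantitative and is intrinsic to the Cauchy--Davenport strategy: since you extract only $|U_i\cdot V_i|\ge 2$ from each good coordinate, covering $\FF_p$ needs at least $p-1$ good coordinates, so your argument cannot even start unless $n\ge p-1$; moreover only about $4\varepsilon n$ coordinates are guaranteed to be strong, each good with probability that your representation count bounds below only by a negative power of $p$ (you say $p^{-1/2}$; it can be pushed to roughly $p^{-1/4}$, but no further by this count), so the Chernoff step forces $n\gtrsim p^{3/2}/\varepsilon$. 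You acknowledge this by writing $n_0(\varepsilon,p)$, but that is a strictly weaker statement than the theorem. In this paper the threshold depends on $\varepsilon$ alone: this is explicit in Theorems \ref{thm1} and \ref{thm3} ($n_0=n_0(\varepsilon)$), and Theorem \ref{th*} is obtained as the $Q=\FF_p$ case of Theorem \ref{thm3}, which is uniform even over all finite quasifields. The paper's proof indeed achieves $n_0\approx 1/\varepsilon$ independently of $p$, so your proposal says nothing in the main regime of interest, where $p$ is large compared with $n$ (for instance $p\to\infty$ with $n$ fixed just above $1/\varepsilon$).

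The reason the paper has no such bottleneck is that it never covers $\FF_p$ one element at a time. Its Lemma \ref{lm1} applies the expander mixing lemma (Lemma \ref{edge}) to the sum--product graph $SP_{\FF_p,n}$ (Lemma \ref{sp-graph-lemma}, $\lambda_2\le \sqrt{2}\,p^{n/2}$) and shows that the single global inequality $|Z|^2\prod_i|U_i|\,|V_i|>2p^{n+2}$ already forces the attainable last coordinates to be all of $\FF_p$: no individual coordinate needs any doubling or product structure. That condition is then verified for at least $|B|/p$ pairs $(\vec{a},\vec{b})$ by the same first-moment identity you use, and the hypothesis $|B|>|H_n|^{3/4+\varepsilon}$ enters only once, to guarantee $\prod_i|X_i|\,|Y_i|>p^{3n/2+7/4}$ whenever $n>1/\varepsilon$, uniformly in $p$. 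To repair your approach you would need a per-coordinate lower bound on $|U_i\cdot V_i|$ growing with $|U_i|,|V_i|$ rather than the trivial bound $2$, i.e.\ a sum--product estimate; Cauchy--Davenport, being purely additive, cannot supply it, and such an estimate is exactly what the spectral argument encodes.
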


Our main focus is to extend this theorem to the second family of extra special groups $M_n$. A second focus of this paper is to consider generalizations of the higher dimensional Heisenberg groups where entries come from a (left) quasifield $Q$ rather than $\FF_p$. We recall the definition of a (left) quasifield:

A set $L$ with a binary operation $*$ is called a \emph{loop} if 
\begin{enumerate}
\item the equation $a * x = b$ has a unique solution in $x$ for every $a,b \in L$,
\item the equation $y * a = b$ has a unique solution in $y$ for every $a,b \in L$, and
\item there is an element $e \in L$ such that $e * x = x * e = x$ for all $x \in L$.
\end{enumerate}
A \emph{(left) quasifield} $Q$ is a set with two binary operations $+$ and $*$ such that $(Q , + )$ is a group with additive identity 0, $(Q^* , * )$ is a loop where $Q^* = Q \backslash \{0 \}$, and the following three conditions hold:
\begin{enumerate}
\item $a * (b + c) = a * b + a * c$ for all $a,b,c \in Q$,
\item $0 * x = 0$ for all $x \in Q$, and 
\item the equation $a * x = b * x + c$ has exactly one solution for every $a,b,c \in Q$ with $a \neq b$.  
\end{enumerate}  

Throughout the rest of the paper we will use the term quasifield to mean left quasifield. Given a quasifield $Q$, we define $H_n(Q)$ by the set of elements 
\[
\{[\vec{x}, \vec{y}, z]: \vec{x} \in Q^n, \vec{y}\in Q^n, z\in Q\}
\]
and a multiplication operation defined by 
\[  [\vec{x}, \vec{y}, z] \cdot [\vec{x}', \vec{y}', z'] = [\vec{x} + \vec{x}', \vec{y} + \vec{y}', z + z' + \langle \vec{x}, \vec{y}' \rangle],
\]
where for $\vec{x},\vec{y}\in Q^n$, if $\vec{x}=(x_1,\cdots, x_n)$ and $\vec{y} = (y_1,\cdots, y_n)$, we define $\langle \vec{x},\vec{y}\rangle = \sum_{i=1}^n x_i*y_i$. Then $H_n(Q)$ is a quasigroup with an identity element (ie, a loop), and when $Q = \FF_p$ we have that $H_n(Q)$ is the $n$-dimensional Heisenberg group $H_n$.

\subsection{Statement of main results}

Our theorems are analogous to Hegyv\'ari and Hennecart's theorem for the groups $M_n$ and the quasigroups $H_n(Q)$. In particular, their structure result is true for all extra special groups. We will define what it means for a subset $B$ of $M_n$ to be a brick in Section \ref{description section}.

\medskip

\begin{theorem}\label{thm1}
For every $\varepsilon>0$, there exists a positive integer $n_0=n_0(\varepsilon)$ such that if $n\ge n_0$, $B\subseteq M_n$ is a brick and
$$
|B|>|M_n|^{3/4+\varepsilon}
$$ 
then there exists a non trivial subgroup $G'$ of $M_n$, namely its center, such that $B\cdot B$ contains  
at least $|B|/p$ cosets of   $G'$. 
\end{theorem}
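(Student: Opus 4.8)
The plan is to run a robust Fourier-analytic count over the center and to verify that its only essential input—linearity of the center-coordinate of a product in the first factor's coordinates—survives the passage from $H_n$ to $M_n$. Fixing the coordinate description of $M_n$ from Section~\ref{description section}, I write a general element as $[\vec x,\vec y,z]$ with $\vec x,\vec y\in\Fp^n$, $z\in\Fp$, the center equal to $[\vec 0,\vec 0,\Fp]$, and multiplication of the form
\[
[\vec x,\vec y,z]\cdot[\vec x',\vec y',z']=[\vec x+\vec x',\ \vec y+\vec y',\ z+z'+\langle\vec x,\vec y'\rangle+g(\vec y,\vec y')],
\]
where the correction $g$ encodes the relations $b_i^p=c$ and depends only on the $\vec y$-coordinates. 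Writing the brick as $B=\{[\vec x,\vec y,z]:\vec x\in\vec X,\ \vec y\in\vec Y,\ z\in W\}$ with $\vec X=X_1\times\cdots\times X_n$, $\vec Y=Y_1\times\cdots\times Y_n$, and $W\subseteq\Fp$ (I rename the center-coordinate set $W$ to avoid clashing with the center), we have $|B|=|W|\prod_{i=1}^n|X_i||Y_i|$. A coset of the center is indexed by its image $(\vec u,\vec v)\in\Fp^{2n}$, and that whole coset lies in $B\cdot B$ exactly when, for every $w\in\Fp$, the representation count
\[
N(\vec u,\vec v,w)=\#\{(\vec x,\vec y,z,\vec x',\vec y',z')\in B\times B:\ \vec x+\vec x'=\vec u,\ \vec y+\vec y'=\vec v,\ z+z'+\langle\vec x,\vec y'\rangle+g(\vec y,\vec y')=w\}
\]
is positive. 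So it suffices to exhibit at least $|B|/p$ targets $(\vec u,\vec v)$ for which $N(\vec u,\vec v,w)>0$ for all $w$.

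The core step is to expand $N$ over the additive characters $\psi$ of $\Fp$. Setting $X_i^{(u_i)}=X_i\cap(u_i-X_i)$ and $Y_i^{(v_i)}=Y_i\cap(v_i-Y_i)$, the sum factors across blocks: the main ($\psi$ trivial) term is $\tfrac1p|W|^2\prod_i|X_i^{(u_i)}||Y_i^{(v_i)}|$, while each nontrivial term is a product of $|\widehat W(t)|^2$ with block sums $S_i(t)=\sum_{x\in X_i^{(u_i)},\,y\in Y_i^{(v_i)}}\psi(t(xy+h_{v_i}(y)))$. Here $h_{v_i}(y)$ is the contribution of $g$, which—crucially—depends on $y$ alone once $\vec v$ is fixed; this is exactly where robustness enters, since this additive twist is annihilated by taking absolute values. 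A Cauchy–Schwarz bound in the $x$-variable then gives $|S_i(t)|\le\sqrt{p\,|X_i^{(u_i)}||Y_i^{(v_i)}|}$ uniformly in $t\ne0$, identically to the $H_n$ case, and Parseval gives $\sum_t|\widehat W(t)|^2=p|W|$. Combining these shows $N(\vec u,\vec v,w)>0$ whenever $|W|^2\prod_i|X_i^{(u_i)}||Y_i^{(v_i)}|>p^{\,n+2}$.

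It remains to turn this pointwise criterion into a count of good targets. Averaging over $(\vec u,\vec v)\in\Fp^{2n}$ and using $\sum_{u_i}|X_i^{(u_i)}|=|X_i|^2$ gives the mean value $|B|^2p^{-2n}$ for the left-hand side; comparing with the threshold $p^{\,n+2}$ reproduces the exponent $3/4$ from $|M_n|=p^{2n+1}$, and the hypothesis $|B|>|M_n|^{3/4+\varepsilon}$ supplies the slack needed once $n\ge n_0(\varepsilon)$. A first-moment argument—bounding the total mass $\sum_{\vec u,\vec v}\prod_i|X_i^{(u_i)}||Y_i^{(v_i)}|=(|B|/|W|)^2$ against the contribution of the targets lying below threshold—then yields at least $|B|/p$ good targets, which is the claim.

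The main obstacle I anticipate is twofold. First, one must pin down the coordinate multiplication of $M_n$ so that the center-coordinate of a product is genuinely linear in $\vec x$ with the remaining correction depending only on $\vec y$; this is precisely what keeps the Cauchy–Schwarz bound on $S_i(t)$ intact and makes the Heisenberg argument transfer without change. Second, the endgame bookkeeping that upgrades the average/threshold estimate to the clean bound of $|B|/p$ cosets requires spending the $\varepsilon$ and the largeness of $n$ carefully. Everything else carries over verbatim from the $H_n$ proof.
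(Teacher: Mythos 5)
Your proposal follows the same architecture as the paper's proof: a pointwise lemma showing that any target $(\vec{u},\vec{v})$ whose mass $|W|^2\prod_i|X_i\cap(u_i-X_i)|\,|Y_i\cap(v_i-Y_i)|$ exceeds roughly $p^{n+2}$ yields a full coset $[\vec{u},\vec{v},\Fp]\subseteq B\cdot B$ (the paper's Lemma \ref{lm1}, with threshold $2p^{n+2}$), followed by the identical first-moment count based on $\sum_{u_i}|X_i\cap(u_i-X_i)|=|X_i|^2$. Your key structural observation --- that the correction term $f(\vec{y},\vec{y}')$ in $M_n$ depends only on the $\vec{y}$-coordinates and splits blockwise, so that once $\vec{v}$ is fixed it is a function of $\vec{y}$ alone --- is exactly the insight the paper uses; the paper absorbs $f(\vec{y},\vec{b}-\vec{y})$ into the vertex labels of the set $F$ in the sum-product graph, while you absorb it as a unimodular character twist killed by absolute values. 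The only methodological difference is the engine for the pointwise lemma: you use additive characters with Cauchy--Schwarz and Parseval, whereas the paper invokes the expander mixing lemma (Lemma \ref{edge}) together with the eigenvalue bound for $SP_{\Fp,n}$ (Lemma \ref{sp-graph-lemma}). These are equivalent in strength here and give the same threshold up to a constant. The paper's spectral formulation is chosen because it transfers verbatim to quasifields (Theorem \ref{thm3}), where one cannot push character sums through the nonassociative multiplicative structure; for Theorem \ref{thm1} over $\Fp$, your Fourier route is perfectly adequate.

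There is, however, one concrete gap in your endgame. Writing $T=\prod_i|X_i|\,|Y_i|$, your first-moment argument gives at most $N\ge T-p^{3n+2}/(|W|^2T)$ good targets, and you need $N\ge |B|/p=|W|T/p$. Rearranging, this requires $T^2|W|^2(p-|W|)\ge p^{3n+3}$, which degenerates as $|W|$ approaches $p$: when $W=\Fp$ the requirement $N\ge T$ is out of reach of any bound of the form $T$ minus a positive error, no matter how large $n$ is. The paper avoids this by splitting off the case $|Z|>p/2$ at the outset: there $Z+Z=\Fp$, so $B\cdot B=[\vec{X}+\vec{X},\vec{Y}+\vec{Y},\Fp]$ is already a union of at least $T\ge|B|/p$ full cosets, and the averaging argument is then run only under the assumption $|Z|\le p/2$, which is also what makes the final inequality $(1-2p^{-3/2})T\ge|B|/p$ close cleanly. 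You flagged that the endgame ``requires spending the $\varepsilon$ and the largeness of $n$ carefully,'' but the true obstruction is the size of $|W|$, not of $n$; adding the trivial large-$|W|$ case completes your proof.
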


\medskip

Combining Theorem \ref{th*} and Theorem \ref{thm1}, we have 

\begin{theorem}\label{thm2}
Let $G$ be an extra special group.  For every $\varepsilon>0,$ there exists a positive integer $n_0=n_0(\epsilon)$ such that if $n\ge n_0$, $B\subseteq G$ is a brick and
$$
|B|>|G|^{3/4+\varepsilon}
$$ 
then there exists a non trivial subgroup $G'$ of $G$, namely its center, such that $B\cdot B$ contains  
at least $|B|/p$ cosets of   $G'$. 
\end{theorem}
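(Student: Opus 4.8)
The plan is to deduce Theorem~\ref{thm2} directly from the classification of extra special groups together with the two special cases already in hand. Recall the structural fact recorded in the introduction: for each prime $p$ and each integer $n\ge 1$ there are exactly two non-isomorphic extra special groups of order $p^{2n+1}$, namely $H_n$ and $M_n$. So, given an extra special group $G$, I would first observe that $|G|=p^{2n+1}$ for some $n\ge 1$ and that $G$ is isomorphic to exactly one of $H_n$ or $M_n$. Since $|H_n|=|M_n|=p^{2n+1}$, the size threshold $|B|>|G|^{3/4+\varepsilon}$ and the target subgroup $G'$ (the center) in Theorem~\ref{thm2} agree verbatim with those in Theorems~\ref{th*} and~\ref{thm1}; these quantities are all invariant under isomorphism.

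The argument then splits into two cases according to which model $G$ is isomorphic to. If $G\cong H_n$, I would transport $B$ across the isomorphism and invoke Theorem~\ref{th*}; if $G\cong M_n$, I would instead invoke Theorem~\ref{thm1}. In both cases the conclusion is that the image of $B\cdot B$ contains at least $|B|/p$ cosets of the center of the model group, and pulling this back along the isomorphism yields the corresponding statement for $G$, because an isomorphism carries center onto center and intertwines the product-set operation. Taking $n_0(\varepsilon)$ to be the maximum of the two thresholds furnished by Theorem~\ref{th*} and Theorem~\ref{thm1} gives a single uniform bound that works regardless of which family $G$ belongs to.

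The only point requiring genuine care is the notion of a \emph{brick}. Because bricks are defined through a fixed coordinate presentation rather than intrinsically, I would need to confirm that the isomorphism $G\to H_n$ (respectively $G\to M_n$) can be chosen so that the image of a brick in $G$ is again a brick in the target, and in particular that the brick structure defined for $M_n$ in Section~\ref{description section} is the one compatible with the coordinates used in the statement of Theorem~\ref{thm1}. I expect this compatibility check to be the main—though entirely routine—obstacle: once the brick structures are matched correctly under the identification of $G$ with its model, Theorem~\ref{thm2} follows as an immediate corollary of Theorems~\ref{th*} and~\ref{thm1} with no further computation.
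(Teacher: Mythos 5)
Your proposal is correct and matches the paper's approach exactly: the paper derives Theorem~\ref{thm2} simply by combining Theorem~\ref{th*} (for $H_n$) and Theorem~\ref{thm1} (for $M_n$), using the fact that every extra special group of order $p^{2n+1}$ is isomorphic to one of these two, with the brick structure understood via that identification. Your additional remarks on transporting bricks across the isomorphism and taking the maximum of the two thresholds are exactly the (routine) details the paper leaves implicit.
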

\medskip

For $Q$ a finite quasifield, we similarly define a subset $B \subseteq H_n(Q)$ to be a {\em brick} if 
\[ B=\{[\underline{x}, \underline{y},z]\text{ such that }\underline{x}\in\underline{X},\   \underline{y}\in\underline{Y},\ z\in Z\}
\]
where $\underline{X}=X_1\times\dots\times X_n$ and
$\underline{Y}=Y_1\times\dots\times Y_n$ with 
non empty-subsets $X_i,Y_i, Z\subseteq Q$.
\medskip

\begin{theorem}\label{thm3}
Let $Q$ be a finite quasifield of order $q$. For every $\varepsilon > 0$, there exists an $n_0 = n_0(\varepsilon)$ such that if $n\geq n_0$, $B\subseteq H_n(Q)$ is a brick, and 
\[
|B| > |H_n(Q)|^{3/4+\varepsilon},
\]
then there exists a non trivial subquasigroup $G'$ of $H_n(Q)$, namely its center $[\vec{0}, \vec{0}, Q]$ such that $B\cdot B$ contains at least $|B|/q$ cosets of $G'$.
\end{theorem}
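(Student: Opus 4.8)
The plan is to reuse the robust, Fourier-analytic skeleton one sets up for Theorems~\ref{thm1} and~\ref{th*}, the only genuinely new ingredient being a bilinear character-sum estimate valid over an arbitrary finite quasifield. Throughout write $P=\prod_{i=1}^n |X_i|\,|Y_i|$, so that $|B|=|Z|\,P$ and $|H_n(Q)|=q^{2n+1}$, and recall that for a finite quasifield $(Q,+)$ is an (elementary abelian) finite abelian group and hence carries a full character group. Since multiplying $[\vec x,\vec y,z]$ on either side by a central element $[\vec 0,\vec 0,t]$ only translates the last coordinate, the cosets of $G'=[\vec 0,\vec 0,Q]$ are exactly the fibres of the base map $[\vec x,\vec y,z]\mapsto(\vec x,\vec y)$, and the coset with base $(\vec a,\vec b)$ lies entirely in $B\cdot B$ precisely when every $s\in Q$ can be written as $s=z+z'+\langle\vec x,\vec y'\rangle$ with $\vec x+\vec x'=\vec a$, $\vec y+\vec y'=\vec b$, $\vec x,\vec x'\in\vec X$, $\vec y,\vec y'\in\vec Y$ and $z,z'\in Z$. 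Setting $R_i(a_i)=X_i\cap(a_i-X_i)$ and $T_i(b_i)=Y_i\cap(b_i-Y_i)$, the admissible choices are $x_i\in R_i(a_i)$ (with $x_i'=a_i-x_i$) and $y_i'\in T_i(b_i)$ (with $y_i=b_i-y_i'$). So the goal becomes: show that at least $|B|/q$ bases $(\vec a,\vec b)$ are ``good'', i.e.\ admit a representation of every $s\in Q$.

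First I would fix a base $(\vec a,\vec b)$ and count, for each $s$, the number $N(\vec a,\vec b,s)$ of such representations by expanding the indicator $\mathbf 1[\,\cdot=s\,]$ over the additive characters $\psi$ of $Q$. Because $\psi$ is additive and $\langle\vec x,\vec y'\rangle=\sum_i x_i*y_i'$, this factorises as
\[
N(\vec a,\vec b,s)=\frac1q\sum_{\psi}\psi(-s)\Bigl(\sum_{z\in Z}\psi(z)\Bigr)^2\prod_{i=1}^n\Bigl(\sum_{x_i\in R_i(a_i),\,y_i'\in T_i(b_i)}\psi(x_i*y_i')\Bigr).
\]
The principal character contributes the main term $q^{-1}|Z|^2\prod_i|R_i(a_i)|\,|T_i(b_i)|$, and $N(\vec a,\vec b,s)>0$ for all $s$ once this dominates the remaining terms.

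The crucial, and I expect hardest, step is the bilinear bound
\[
\Bigl|\sum_{x\in R,\ y\in T}\psi(x*y)\Bigr|\le\sqrt{q\,|R|\,|T|}\qquad(\psi\neq\psi_0),
\]
for arbitrary $R,T\subseteq Q$. I would obtain it by Cauchy--Schwarz in one variable followed by opening the square to $\sum_{x,x'\in R}\sum_{y\in Q}\psi\bigl(x*y-x'*y\bigr)$; here the quasifield axioms do the work, since left distributivity makes $y\mapsto x*y-x'*y$ additive, while the solvability axiom (applied with $c=0$) forces this map to be injective, hence a bijection of $(Q,+)$, when $x\neq x'$. Each such inner sum then vanishes, leaving only the diagonal $x=x'$ and the claimed bound. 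Feeding this estimate together with Parseval's identity $\sum_\psi|\sum_{z\in Z}\psi(z)|^2=q|Z|$ into the error term shows that the base $(\vec a,\vec b)$ is good whenever
\[
W(\vec a,\vec b):=\prod_{i=1}^n|R_i(a_i)|\,|T_i(b_i)|>q^{\,n}\,\frac{(q-|Z|)^2}{|Z|^2}.
\]

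Finally I would count the good bases by a first-moment argument. Summing $W$ over all $q^{2n}$ bases separates across coordinates and gives $\sum_{(\vec a,\vec b)}W(\vec a,\vec b)=\bigl(\prod_i|X_i|\,|Y_i|\bigr)^2=P^2$, while trivially $W(\vec a,\vec b)\le P$. Splitting this sum into good and bad bases and bounding the bad contribution by $q^{2n}\cdot q^n(q-|Z|)^2/|Z|^2$ yields a lower bound on the number of good bases of the shape $P-q^{3n}(q-|Z|)^2/(|Z|^2P)$; a short computation shows this exceeds $|B|/q=|Z|P/q$ as soon as $|B|^2\ge q^{3n+1}(q-|Z|)$, which in turn is implied by $|B|>|H_n(Q)|^{3/4+\varepsilon}=q^{(2n+1)(3/4+\varepsilon)}$ once $n\ge n_0(\varepsilon)$ (the exponent $3/4$ is exactly the limit of $(\tfrac32 n+1)/(2n+1)$). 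The degenerate case $Z=Q$ is immediate, since then $z+z'$ already ranges over all of $Q$ and every base in $(\vec X+\vec X)\times(\vec Y+\vec Y)$ is good. The main difficulty is genuinely the bilinear estimate: everything else is formal once one knows that $y\mapsto x*y$ and the associated difference maps behave like additive bijections, which is precisely what the quasifield axioms guarantee.
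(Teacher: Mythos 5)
Your proposal is correct, and it reaches Theorem~\ref{thm3} by a genuinely different route than the paper. The paper's key step is Lemma~\ref{lm3}: for a fixed base $(\vec{a},\vec{b})$ it encodes the representation problem as an edge-counting problem in the sum-product graph $SP_{Q,n}$ and invokes the expander mixing lemma (Lemma~\ref{edge}) together with the eigenvalue bound $\lambda_2\le 2^{1/2}q^{n/2}$ of Lemma~\ref{sp-graph-lemma}, proved in the appendix by counting common neighbors. You replace this entire package by additive characters of $(Q,+)$: orthogonality, Parseval, and the bilinear bound $\bigl|\sum_{x\in R,\,y\in T}\psi(x*y)\bigr|\le\sqrt{q|R||T|}$. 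The algebraic core is identical in the two treatments --- axiom (3) with $c=0$ makes $y\mapsto x*y-x'*y$ an additive bijection, which is exactly what drives Case 2 of the paper's appendix proof --- and from that point on the arguments merge: your identity $\sum_{(\vec{a},\vec{b})}W(\vec{a},\vec{b})=P^2$ is the paper's equation~\eqref{eq1}, and the first-moment count at the end is the same. What your version buys: it is self-contained (no spectral machinery), your goodness threshold $W>q^{n}(q-|Z|)^2/|Z|^2$ is slightly sharper than the paper's $W>2q^{n+2}/|Z|^2$, and the factor $q-|Z|$ coming from Parseval handles large $|Z|$ automatically, whereas the paper must dispose of the case $|Z|>q/2$ separately via $Z+Z=Q$. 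What the paper's version buys: Lemma~\ref{sp-graph-lemma} is an off-the-shelf tool (cf.\ \cite{vinhajm, PTTV}) that is reusable in other settings. The one point you should not leave as a bare ``recall'' is that $(Q,+)$ is abelian: the paper's definition of a quasifield only postulates that $(Q,+)$ is a group, and your whole approach (the existence of enough characters, and the rearrangement showing $y\mapsto x*y-x'*y$ is additive) depends on commutativity. The fact is true and standard --- for $a\neq 0$ the maps $y\mapsto a*y$ are additive bijections of $(Q,+)$ acting transitively on $Q\setminus\{0\}$, which forces $(Q,+)$ to be elementary abelian --- but it deserves a proof or citation in your write-up.
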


Taking $Q = \FF_p$ gives Theorem \ref{th*} as a corollary.

\section{Preliminaries}
\subsection{A description of $M_n$}\label{description section}
We give a description of $M_n$ with which it is convenient to work. Define a group $G$ whose elements are triples $[\vec{x}, \vec{y}, z]$ where $\vec{x} = (x_1, \dots, x_n)$, $\vec{y} = (y_1,\dots, y_n)$, with $x_i, y_i, z \in \FF_p$ for $1 \leq i \leq n$. The group operation in $G$ is given by
\[  [\vec{x}, \vec{y}, z] \cdot [\vec{x}', \vec{y}', z'] = [\vec{x} + \vec{x}', \vec{y} + \vec{y}', z + z' + \langle \vec{x},\vec{y}'\rangle + f(\vec{y}, \vec{y}')]
\]
where the function $f: \ZZ^n \times \ZZ^n \rightarrow \NN$ is defined by 
\[ f\left( (y_1, \dots, y_n), (y_1', \dots, y_n') \right) = \sum_{i =1}^n \floor{ \frac{y_i \bmod{p} + y_i' \bmod{p}}{p} },
\]
where the notation $x\bmod{p} \in \{0,1,\cdots, p-1\}$ means to take the element $x\in \mathbb{F}_p$ and consider it as an integer. Concretely, $f$ counts the number of components where (after reducing mod $p$) $y_i + y_i' \geq p$. This is slight abuse of notation, as $\vec{y}, \vec{y}' \in \FF_p^n$, but is well-defined if we regard them as elements of $\ZZ^n$.

\begin{lemma} With the operation defined above, $G$ is a group isomorphic to $M_n$.
\end{lemma}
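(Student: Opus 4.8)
The plan is to exhibit $G$ as a central extension of $\FF_p^{2n}$ by $\FF_p$, verify that the prescribed multiplication really is a group law, and then match generators with the presentation of $M_n$. First I would record that the underlying set of $G$ is $\FF_p^n \times \FF_p^n \times \FF_p$, so $|G| = p^{2n+1} = |M_n|$; this count will be used at the very end. Writing a base element as $g = (\vec{x}, \vec{y}) \in \FF_p^{2n}$ and the last coordinate as the ``central'' one, the operation takes the form $[g,z]\cdot[g',z'] = [g+g', z+z'+\beta(g,g')]$ with $\beta(g,g') = \langle \vec{x}, \vec{y}'\rangle + f(\vec{y}, \vec{y}')$. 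For such a formula to define a group with identity $[\vec{0}, \vec{0}, 0]$ (one checks $f(\vec{y}, \vec{0}) = f(\vec{0}, \vec{y}') = 0$ immediately) and with inverses, the only nontrivial requirement is associativity, which is exactly the $2$-cocycle identity
\[ \beta(g_1,g_2) + \beta(g_1 + g_2, g_3) = \beta(g_2, g_3) + \beta(g_1, g_2 + g_3) \pmod p. \]

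I expect this cocycle identity to be the main obstacle, and I would dispatch it by splitting $\beta = \beta_1 + \beta_2$ with $\beta_1(g,g') = \langle \vec{x}, \vec{y}'\rangle$ and $\beta_2(g,g') = f(\vec{y}, \vec{y}')$ and checking each summand separately. The term $\beta_1$ is the standard Heisenberg cocycle; it is a cocycle either by a one-line bilinearity computation or, for free, because the excerpt already realizes $H_n$ as a matrix group. For $\beta_2$ the clean route is to avoid case analysis on carries. Let $s\colon \FF_p \to \{0,1,\dots,p-1\}\subseteq\ZZ$ be the standard lift; since $s(a)+s(b)-s(a+b)\in\{0,p\}$, the $i$-th summand of $f$ equals $\tfrac1p\bigl(s(y_i)+s(y_i')-s(y_i+y_i')\bigr)$. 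Hence $p\,f(\vec{y},\vec{y}') = (\delta\phi)(\vec{y},\vec{y}')$ for the $1$-cochain $\phi(\vec{y}) = \sum_i s(y_i)$, so the integer-valued $f$ is (up to the factor $p$) a coboundary and automatically satisfies the cocycle identity over $\ZZ$; reducing mod $p$ shows $\beta_2$ is an $\FF_p$-valued cocycle. Summing, $\beta$ is a cocycle and $G$ is a group.

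It remains to produce the isomorphism. I would set $\tilde a_i = [\vec{e}_i, \vec{0}, 0]$, $\tilde b_i = [\vec{0}, \vec{e}_i, 0]$, and $\tilde c = [\vec{0}, \vec{0}, 1]$, and verify that they satisfy every relation in the presentation of $M_n$: that $\tilde c$ is central and of order $p$, that the $\tilde a_i$ pairwise commute and have order $p$, that $\tilde a_i, \tilde b_j$ commute for $i \neq j$, and that $[\tilde a_i, \tilde b_i] = \tilde c$; all of these are short computations in which $f$ contributes nothing. The one relation carrying real content is $\tilde b_i^{\,p} = \tilde c$, which is precisely what distinguishes $M_n$ from $H_n$: computing $\tilde b_i^{\,k} = [\vec{0}, k\vec{e}_i, z_k]$ inductively, the function $f$ produces a carry exactly once, when $k$ passes from $p-1$ to $p$ and the $i$-th $y$-coordinate rolls over, so that $z_p = 1$ and $\tilde b_i^{\,p} = \tilde c$. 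With all relations verified, von Dyck's theorem yields a homomorphism $M_n \to G$ sending $a_i \mapsto \tilde a_i$, $b_i \mapsto \tilde b_i$, $c \mapsto \tilde c$; since $\tilde c$ generates the center $\{[\vec{0},\vec{0},z]\}$ and products of the $\tilde a_i, \tilde b_i$ reach every base coordinate, the map is surjective, and as $|M_n| = |G| = p^{2n+1}$ it is an isomorphism.
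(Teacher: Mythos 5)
Your proposal is correct, and its overall architecture parallels the paper's: both reduce the group axioms to an identity for $f$ (the Heisenberg part being handled by the known matrix-group structure of $H_n$), then match generators against the presentation of $M_n$ and finish with the counting argument $|G|=p^{2n+1}=|M_n|$. But you execute the two substantive steps differently. For associativity, the paper verifies the identity $f(\vec{y}+\vec{y}', \vec{y}'') + f(\vec{y}, \vec{y}') = f(\vec{y}, \vec{y}' + \vec{y}'') + f(\vec{y}', \vec{y}'')$ by a direct computation, checking that both sides equal $\sum_i \floor{ (y_i \bmod{p} + y_i' \bmod{p} + y_i'' \bmod{p})/p }$; you instead exhibit $p\,f$ as the coboundary $\delta\phi$ of the $1$-cochain $\phi(\vec{y}) = \sum_i s(y_i)$ built from the standard lift, so the cocycle identity holds formally over $\ZZ$ and descends mod $p$ with no case analysis --- a more conceptual argument that also explains where $f$ comes from (it is a carry cocycle). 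For the identification with $M_n$, the paper defines a map $\varphi\colon G \to M_n$ on generators and asserts the relations are easy to check; you invoke von Dyck's theorem in the opposite, and logically cleaner, direction $M_n \to G$, which is the direction in which specifying images of generators of a \emph{presented} group automatically yields a well-defined homomorphism, and you explicitly verify the one relation carrying real content, $\tilde b_i^{\,p}=\tilde c$ (the carry occurring exactly once, as $k$ passes from $p-1$ to $p$), which is precisely what distinguishes $M_n$ from $H_n$ and which the paper leaves implicit. What the paper's computation buys is brevity and self-containedness; what yours buys is conceptual clarity via the central-extension framework, a rigorous treatment of the homomorphism's well-definedness, and an explicit check of the distinguishing relation.
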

\begin{proof} We first need to check associativity of the operation. If $f\equiv 0$, we would obtain the Heisenberg group, thus for associativity, after cancellation, it remains to prove
\[ f(\vec{y}+\vec{y}', \vec{y}'') + f(\vec{y}, \vec{y}') =  f(\vec{y}, \vec{y}' + \vec{y}'') + f(\vec{y}', \vec{y}'')
\]
or equivalently
\begin{align*} &\floor{ \frac{(y_i + y_i')\bmod{p} + y_i'' \bmod{p}}{p} } +  \floor{ \frac{y_i \bmod{p} + y_i' \bmod{p}}{p} } \\
& = \floor{ \frac{y_i\bmod{p} + (y_i' + y_i'') \bmod{p}}{p} } +  \floor{ \frac{y_i'\bmod{p} + y_i'' \bmod{p}}{p} }.
\end{align*}
The expression 
\[
\floor{\frac{y_i \bmod{p} + y_i' \bmod{p} + y_i'' \bmod{p} }{p} } \in \{0,1,2\}
\]
can be checked to be equal to the left hand side. Comparing it with the right hand side it is enough to formally change the variables $y_i$ and $y_i''$. Both the left and right hand side count the largest multiple of $p$ less than or equal to 
\[
y_i \bmod{p} + y_i'\bmod{p} + y_i''\bmod{p}.
\]

Since $G$ is generated $\{[\vec{e_i}, 0, 0] , [0, \vec{e_i}, 0], [0,0,1]\}$, we define a homomorphism $\varphi: G \to M_n$ by $\varphi \left([\vec{e_i}, 0, 0]\right) = a_i$, $\varphi \left([0, \vec{e_i}, 0]\right) = b_i$, and $\varphi \left([0, 0, 1]\right) = c$. This map is clearly surjective and it is easy to check that the generators of $G$ satisfy the relations in $M_n$. Since $|G| = p^{2n+1}$, $\varphi$ is an isomorphism and $G \cong M_n$, as claimed.
\end{proof}

With this description, there is a natural way to define a brick in $M_n$. A subset $B$ of $M_n$ is said to be a \textit{brick} if
$$
B=\{[\underline{x}, \underline{y},z]\text{ such that }\underline{x}\in\underline{X},\   \underline{y}\in\underline{Y},\ z\in Z\}
$$
where $\underline{X}=X_1\times\dots\times X_n$ and
$\underline{Y}=Y_1\times\dots\times Y_n$ with 
non empty-subsets $X_i,Y_i, Z\subseteq\mathbb{F}_p$.

\subsection{Tools from spectral graph theory}
For a graph $G$ with vertex set $\{v_1,\dots, v_n\}$, the {\em adjacency matrix} of $G$ is the matrix with a $1$ in row $i$ and column $j$ if $(v_i, v_j)$ is an edge and a $0$ otherwise. Since this is a real, symmetric matrix, it has a full set of real eigenvalues. Let $\lambda_1 \geq \lambda_2 \geq \dots \geq \lambda_n$ be
the eigenvalues of its adjacency matrix. 

If $G$ is a $d$-regular graph, then its adjacency matrix has row sum $d$. In this case, $\lambda_1 = d$ with the all-one eigenvector $\mathbf{1}$.  Let $\mathbf{v}_{i}$ denote the corresponding eigenvector for $\lambda_i$. We will make use of the trick that for $i\geq 2$, $\mathbf{v}_i \in \mathbf{1}^{\bot}$, so $J\mathbf{v}_i = 0$ where $J$ is the all-one matrix of size $n \times n$ (see \cite{bh} for more background on spectral graph theory).

It is well-known (see \cite[Chapter 9]{as} for more details) that if $\lambda_2$ is much smaller than the
degree $d$, then $G$ has certain random-like properties. A graph is called {\em bipartite} if its vertex set can be partitioned into two parts such that all edges have one endpoint in each part. For $G$ be a bipartite graph with partite sets $P_1$ and $P_2$ and $U\subseteq P_1$ and $W\subseteq P_2$, let $e (U, W)$ be the number of pairs $(u, w)$ such that
$u \in U$, $w \in W$, and $(u, w)$ is an edge of $G$. We recall the following 
well-known fact (see, for example, \cite{as}).
\medskip

\begin{lemma}[Corollary 9.2.5, \cite{as}]\label{edge}
  Let $G = (V, E)$ be $d$-regular bipartite graph on $2n$ vertices with partite sets $P_1$ and $P_2$. For any two sets $B\subseteq P_1$ and  $C
  \subseteq P_2$, we have
  \[ \left| e (B, C) - \frac{d|B | |C|}{n} \right| \leq \lambda_2 \sqrt{|B| |C|}. \]
\end{lemma}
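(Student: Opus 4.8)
The plan is to prove this as an instance of the expander mixing lemma, working with the full $2n\times 2n$ adjacency matrix $A$ of $G$. Writing $\chi_B,\chi_C\in\R^{2n}$ for the indicator vectors of $B$ and $C$ (so that $\chi_B$ is supported on $P_1$ and $\chi_C$ on $P_2$), the quantity $e(B,C)$ is exactly the bilinear form $\chi_B^{\top}A\chi_C$, since an edge between $u\in B$ and $w\in C$ contributes a $1$ in the $(u,w)$ entry of $A$. The strategy is then to expand $\chi_B$ and $\chi_C$ in an orthonormal eigenbasis $\mathbf{v}_1,\dots,\mathbf{v}_{2n}$ of $A$ and to isolate the main term coming from the extreme eigenvectors.

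First I would record the relevant spectral facts for a $d$-regular bipartite graph. The top eigenvalue is $\lambda_1=d$ with $\mathbf{v}_1=\frac{1}{\sqrt{2n}}\mathbf{1}$, the all-ones vector. Crucially, because $G$ is bipartite its spectrum is symmetric about $0$, so $-d$ is also an eigenvalue, realized at the bottom $\lambda_{2n}=-d$ by the vector $\mathbf{v}_{2n}=\frac{1}{\sqrt{2n}}(\mathbf{1}_{P_1}-\mathbf{1}_{P_2})$ that is $+1$ on $P_1$ and $-1$ on $P_2$. Every intermediate eigenvalue then satisfies $|\lambda_i|\le\lambda_2$ for $2\le i\le 2n-1$.

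Expanding $e(B,C)=\sum_{i=1}^{2n}\lambda_i(\chi_B^{\top}\mathbf{v}_i)(\mathbf{v}_i^{\top}\chi_C)$, I would compute the $i=1$ and $i=2n$ contributions directly. Since $\chi_B$ is supported on $P_1$ and $\chi_C$ on $P_2$, one gets $\chi_B^{\top}\mathbf{v}_1=|B|/\sqrt{2n}$ and $\mathbf{v}_1^{\top}\chi_C=|C|/\sqrt{2n}$, while $\chi_B^{\top}\mathbf{v}_{2n}=|B|/\sqrt{2n}$ and $\mathbf{v}_{2n}^{\top}\chi_C=-|C|/\sqrt{2n}$. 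The $i=1$ term contributes $d|B||C|/(2n)$ and the $i=2n$ term contributes $(-d)\cdot(-|B||C|/(2n))=d|B||C|/(2n)$, so together they give the main term $d|B||C|/n$. For the remaining sum over $2\le i\le 2n-1$, I would bound each $|\lambda_i|$ by $\lambda_2$ and apply Cauchy--Schwarz, using orthonormality of the $\mathbf{v}_i$ (so that $\sum_i(\chi_B^{\top}\mathbf{v}_i)^2\le\|\chi_B\|^2=|B|$) to obtain $\sum_i|\chi_B^{\top}\mathbf{v}_i|\,|\mathbf{v}_i^{\top}\chi_C|\le\|\chi_B\|\,\|\chi_C\|=\sqrt{|B|}\sqrt{|C|}$. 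This yields the claimed bound.

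The main subtlety, and the step most likely to cause an error, is the coefficient in the main term: naively keeping only the all-ones eigenvector $\mathbf{v}_1$ produces $d|B||C|/(2n)$, which is off by a factor of two. The bipartite structure is precisely what repairs this, since the bottom eigenvector at $-d$ contributes an identical amount, the sign flip cancelling against the negative eigenvalue. I would therefore take care to verify the symmetry of the bipartite spectrum and the explicit form of $\mathbf{v}_{2n}$, after which the remainder of the argument is routine. (Equivalently, one could pass to the $n\times n$ biadjacency matrix $M$ and argue through its singular values, where the top singular value $d$ has left and right singular vectors both proportional to $\mathbf{1}$; there the factor of two never appears, which is a useful consistency check.)
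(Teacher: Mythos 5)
Your proof is correct, and it is exactly the standard argument behind the cited result --- the paper itself does not reprove Lemma \ref{edge}, but simply quotes Corollary 9.2.5 of Alon--Spencer, whose proof is the spectral decomposition you give: expand the indicator vectors in an orthonormal eigenbasis, keep both extreme eigenvectors for the main term, and apply Cauchy--Schwarz to the rest. In particular, you correctly handle the one genuine subtlety, namely that in the bipartite setting the eigenvector $(\mathbf{1}_{P_1}-\mathbf{1}_{P_2})/\sqrt{2n}$ at eigenvalue $-d$ must be kept alongside the all-ones eigenvector, since otherwise the main term comes out as $d|B||C|/(2n)$ rather than the stated $d|B||C|/n$.
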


\subsection{Sum-product graphs}
Let $Q$ be a finite quasifield. The sum-product graph $SP_{Q, n}$ is defined as follows.  $SP_{Q,n}$ is a bipartite graph with its vertex set partitioned into partite sets $\mathbf{X}$ and $\mathbf{Y}$, where $\mathbf{X} = \mathbf{Y}= Q^n \times Q$. Two vertices $U = (\vec{x}, z) \in \mathbf{X}$ and
$V = (\vec{y}, z') \in \mathbf{Y}$ are connected by an edge, $(U, V)
\in E (SP_{Q, n})$, if and only if $\langle \vec{x}, \vec{y} \rangle=z+z'$. We need information about the eigenvalues of $SP_{Q,n}$.
\medskip

\begin{lemma}\label{sp-graph-lemma}
If $Q$ is a quasifield of order $q$, then the graph $SP_{Q,n}$ is $q^n$ regular and has $\lambda_2 \leq 2^{1/2}q^{n/2}$.
\end{lemma}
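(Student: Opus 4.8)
The plan is to pass to the square of the adjacency operator and diagonalize it explicitly. Since $SP_{Q,n}$ is bipartite, write $N$ for its $q^{n+1}\times q^{n+1}$ biadjacency matrix, with rows indexed by $\mathbf{X}$ and columns by $\mathbf{Y}$; then the adjacency matrix is $A=\bigl(\begin{smallmatrix}0&N\\ N^{T}&0\end{smallmatrix}\bigr)$, whose eigenvalues are $\pm$ the square roots of the eigenvalues of $NN^{T}$. Consequently it suffices to show that the largest eigenvalue of $NN^{T}$ is $q^{2n}$ and the second largest is $q^{n}$. Regularity is immediate: fixing $U=(\vec{x},z)\in\mathbf{X}$, for each of the $q^{n}$ choices of $\vec{y}\in Q^{n}$ the defining relation $\langle\vec{x},\vec{y}\rangle=z+z'$ determines $z'$ uniquely, so $U$ has exactly $q^{n}$ neighbours; the same count from the $\mathbf{Y}$ side shows $SP_{Q,n}$ is $q^{n}$-regular, giving $\lambda_1=q^{n}$.

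The content is in computing the off-diagonal entries of $NN^{T}$, namely the number of common neighbours of two vertices $(\vec{x},z),(\vec{x}',z')\in\mathbf{X}$. A common neighbour $(\vec{y},w)$ must satisfy $\langle\vec{x},\vec{y}\rangle=z+w$ and $\langle\vec{x}',\vec{y}\rangle=z'+w$; eliminating $w$ (which is then determined) reduces this to counting $\vec{y}\in Q^{n}$ with $\sum_{i}(x_i'* y_i-x_i* y_i)=z'-z$, where I use that the additive group of a finite quasifield is abelian so the terms may be regrouped. If $\vec{x}=\vec{x}'$ the left side is $0$, so there are $q^{n}$ common neighbours when also $z=z'$ (the diagonal) and none when $z\neq z'$. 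If $\vec{x}\neq\vec{x}'$, pick an index $i_0$ with $x_{i_0}\neq x_{i_0}'$; the third quasifield axiom says $a* x=b* x+c$ has a unique solution whenever $a\neq b$, so $y_{i_0}\mapsto x_{i_0}'* y_{i_0}-x_{i_0}* y_{i_0}$ is a bijection of $Q$. Fixing the remaining coordinates of $\vec{y}$ freely and solving for $y_{i_0}$ then yields exactly $q^{n-1}$ common neighbours, independent of how many coordinates of $\vec x$ and $\vec x'$ differ. Thus, grouping the index set $\mathbf{X}=Q^{n}\times Q$ by the first coordinate,
\[
NN^{T}=q^{n}I_{q^{n+1}}-q^{n-1}\,(I_{q^{n}}\otimes J_{q})+q^{n-1}J_{q^{n+1}},
\]
where $I_m$, $J_m$ denote the identity and all-ones matrices of size $m$.

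Finally I would diagonalize this matrix. The three summands pairwise commute, and a simultaneous eigenbasis is given by the vectors $\mathbf{u}\otimes\mathbf{v}$ with $\mathbf{u}$ an eigenvector of $J_{q^{n}}$ and $\mathbf{v}$ an eigenvector of $J_{q}$, using that $J_m$ has eigenvalue $m$ on the all-ones vector and $0$ on its orthogonal complement. Reading off the four cases (whether each of $\mathbf{u},\mathbf{v}$ is or is not the all-ones vector) gives eigenvalues $q^{2n}$ (once), $q^{n}$, and $0$ for $NN^{T}$; the value $q^{2n}$ recovers $\lambda_1=q^{n}$ and the second largest value $q^{n}$ gives $\lambda_2=q^{n/2}\le 2^{1/2}q^{n/2}$, as required. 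The main obstacle is not the linear algebra but the common-neighbour count: because $Q$ is only left-distributive, $\langle\cdot,\cdot\rangle$ need not be additive in its first argument, so the count cannot be reduced to solving a linear system over a field. The quasifield axiom guaranteeing a unique solution of $a* x=b* x+c$ is exactly what replaces bilinearity and forces the off-diagonal entries to be the constant $q^{n-1}$; recognizing that this axiom (together with abelianness of the additive group) is precisely what the argument requires is the crux.
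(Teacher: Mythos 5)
Your proof is correct, and while the combinatorial core coincides with the paper's argument, your linear-algebra endgame is genuinely different and in fact sharper. Both proofs establish regularity the same way and perform the identical common-neighbor case analysis (including the crucial use of the quasifield axiom that $a*x = b*x + c$ has a unique solution when $a \neq b$, which is exactly what the paper invokes when it says the choices ``determine $y_i$ uniquely''). The divergence is in what happens after the entries of the squared matrix are known. The paper packages the off-diagonal structure into an auxiliary $(q-1)$-regular graph $H$ (two vertices adjacent iff they lie in the same part and have no common neighbors), writes $M^2$ as a combination of $J$-blocks, $I$, and the adjacency matrix $E$ of $H$, kills the $J$-blocks using orthogonality of $v_2$ to $\chi_{\mathbf{X}}$ and $\chi_{\mathbf{Y}}$, and then bounds the relevant eigenvalue of $E$ crudely by its degree $q-1$; this yields $\lambda_2 \leq 2^{1/2}q^{n/2}$. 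You instead recognize that this $E$ is precisely $I_{q^n}\otimes(J_q - I_q)$, so that $NN^T = q^nI - q^{n-1}(I_{q^n}\otimes J_q) + q^{n-1}J_{q^{n+1}}$ is a sum of commuting tensor products, and you diagonalize it exactly, obtaining the full spectrum $\{q^{2n},\, q^n,\, 0\}$ and hence $\lambda_2 = q^{n/2}$ on the nose. What your approach buys is the removal of the factor $2^{1/2}$ (which would propagate to slightly better constants in Lemma \ref{lm1} and its analogues); what the paper's approach buys is robustness, since bounding $E$ by its degree requires only knowing that $H$ is regular, not its precise block structure. One further point in your favor: you explicitly flag that regrouping $\sum_i x_i * y_i - \sum_i x_i' * y_i$ into $\sum_i(x_i * y_i - x_i' * y_i)$ requires $(Q,+)$ to be abelian --- a true fact for finite quasifields, though nontrivial given that the paper's definition only demands a group --- whereas the paper performs this rearrangement silently.
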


\medskip

We provide a proof of Lemma \ref{sp-graph-lemma} for completeness in the appendix, and we note that similar lemmas were proved in \cite{vinhajm} and \cite{PTTV}.

\section{Proof of Theorem \ref{thm1}}
\begin{lemma}\label{lm1}
Let  $B\subseteq M_n$ be a brick in $M_n$ with $B = [\vec{X}, \vec{Y}, Z]$ where $\vec{X} = X_1 \times \cdot \times X_n$ and $\vec{Y} = Y_1 \times\dots \times Y_n$. For given $\vec{a}=(a_1, \dots, a_n), \vec{b}=(b_1,\dots, b_n)\in \mathbb{F}_p^n$, suppose that \[|Z|^2\prod_{i=1}^n|X_i\cap (a_i-X_i)||Y_i\cap (b_i-Y_i)|> 2p^{n+2},\]
then we have 
\[B\cdot B \supseteq [\vec{a}, \vec{b}, \mathbb{F}_p].\]
\end{lemma}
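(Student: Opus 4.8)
The plan is to fix a target $w \in \mathbb{F}_p$ and show $[\vec{a}, \vec{b}, w] \in B\cdot B$; since $w$ is arbitrary this gives $B\cdot B \supseteq [\vec{a}, \vec{b}, \mathbb{F}_p]$. Writing a product $[\vec{x}, \vec{y}, z]\cdot[\vec{x}', \vec{y}', z'] = [\vec{a}, \vec{b}, w]$ out with the operation on $M_n$, the first two coordinates force $\vec{x}' = \vec{a} - \vec{x}$ and $\vec{y}' = \vec{b} - \vec{y}$, and requiring both factors to lie in $B$ is exactly the condition that $\vec{x} \in \vec{X} \cap (\vec{a} - \vec{X})$, that $\vec{y}' \in \vec{Y} \cap (\vec{b} - \vec{Y})$ (equivalently $\vec{y} \in \vec{Y} \cap (\vec{b} - \vec{Y})$), and that $z, z' \in Z$. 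Since $\vec{X},\vec{Y}$ are boxes, these ranges factor componentwise, and $|\vec{X} \cap (\vec{a}-\vec{X})| = N_X := \prod_i |X_i \cap (a_i - X_i)|$ and similarly $N_Y := \prod_i |Y_i \cap (b_i - Y_i)|$. The third coordinate then imposes the single scalar equation
\[ z + z' + \langle \vec{x}, \vec{y}' \rangle + f(\vec{y}, \vec{y}') = w, \]
so the number of representations of $[\vec{a}, \vec{b}, w]$ equals the number of tuples $(\vec{x}, \vec{y}', z, z')$ in the above ranges satisfying this equation.

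The key observation is that, since $\vec{y} + \vec{y}' = \vec{b}$ is fixed, the nonlinear term $f(\vec{y}, \vec{y}') = f(\vec{b} - \vec{y}', \vec{y}')$ depends only on $\vec{y}'$; write it as $g(\vec{y}')$. This lets me absorb it into one side of a bipartite incidence count and recover the sum-product structure. I would recast the count as an edge count in $SP_{\mathbb{F}_p, n}$ by setting
\[ U = \{(\vec{x}, -z) : \vec{x} \in \vec{X} \cap (\vec{a} - \vec{X}),\ z \in Z\} \subseteq \mathbf{X}, \]
\[ W = \{(\vec{y}', w - g(\vec{y}') - z') : \vec{y}' \in \vec{Y} \cap (\vec{b} - \vec{Y}),\ z' \in Z\} \subseteq \mathbf{Y}. \]
Both defining maps are injective, so $|U| = N_X|Z|$ and $|W| = N_Y|Z|$. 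By the definition of $SP_{\mathbb{F}_p, n}$, a pair $(\vec{x}, -z) \in U$, $(\vec{y}', w - g(\vec{y}') - z') \in W$ is an edge precisely when $\langle \vec{x}, \vec{y}'\rangle = -z + w - g(\vec{y}') - z'$, i.e.\ exactly when the scalar equation above holds. Hence $e(U, W)$ counts the representations of $[\vec{a}, \vec{b}, w]$, and it suffices to prove $e(U, W) > 0$.

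Now I apply Lemma \ref{edge} to $SP_{\mathbb{F}_p, n}$, which by Lemma \ref{sp-graph-lemma} is $p^n$-regular with $\lambda_2 \le 2^{1/2} p^{n/2}$ on partite sets of size $p^{n+1}$. The main term is
\[ \frac{p^n |U||W|}{p^{n+1}} = \frac{|Z|^2 N_X N_Y}{p}, \]
while the error is at most $\lambda_2 \sqrt{|U||W|} \le 2^{1/2} p^{n/2} |Z| \sqrt{N_X N_Y}$. Dividing the desired inequality (main term $>$ error) by $|Z|\sqrt{N_X N_Y}$ and squaring shows it is equivalent to $|Z|^2 N_X N_Y > 2 p^{n+2}$, which is precisely the hypothesis; hence $e(U, W) > 0$, giving $[\vec{a}, \vec{b}, w] \in B\cdot B$.

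The one genuinely nontrivial point, and the step I expect to be the main obstacle, is verifying that the cocycle-type term $f$ contributes only a function of $\vec{y}'$ (using $\vec{y} = \vec{b} - \vec{y}'$) and can therefore be shifted into the vertex labels of $W$ without disturbing the defining relation of the sum-product graph. Once this is checked, the remainder is the same expander-mixing computation that handles the Heisenberg case, which is exactly why the proof is ``robust'' and extends unchanged from $H_n$ to $M_n$.
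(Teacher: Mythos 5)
Your proof is correct and is essentially the paper's own argument: the paper likewise reduces to counting solutions of $z+z'+\langle \vec{x},\vec{b}-\vec{y}\rangle+f(\vec{y},\vec{b}-\vec{y})=\lambda$, encodes the cocycle term $f$ into the vertex labels of one side of the sum-product graph $SP_{\FF_p,n}$, and applies Lemma \ref{edge} with Lemma \ref{sp-graph-lemma} to force a positive edge count under the stated hypothesis. The only differences are cosmetic (you parametrize by $\vec{y}'$ and shift the target $w$ into the $\mathbf{Y}$-side set, while the paper parametrizes by $\vec{y}$ and puts $\lambda$ on the $\mathbf{X}$-side), and you spell out the mixing computation the paper leaves implicit.
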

\begin{proof}
Let $X_i'=X_i\cap (a_i-X_i)$, $Y_i'=Y_i\cap (b_i-Y_i), X'=(X_1',\dots, X_n')$, and $Y'=(Y_1',\dots, Y_n')$.
We first have 
\[B\cdot B\supseteq \{[\vec{x}, \vec{y}, z] \cdot [\vec{a}-\vec{x},\vec{b}-\vec{y}, z' ]~\colon \vec{x}\in X', \vec{y}\in Y', z, z'\in Z\}.\]
On the other hand, it follows from the multiplicative rule in $M_n$ that  for 
\[[\vec{x}, \vec{y}, z] \cdot [\vec{a}-\vec{x},\vec{b}-\vec{y}, z' ]=[\vec{a}, \vec{b}, z+z'+ \langle \vec{x}, (\vec{b}-\vec{y}) \rangle+f(\vec{y}, \vec{b}-\vec{y})].\]
To conclude the proof of the lemma, it is enough to prove that 
\[\left\lbrace z+z'+ \langle \vec{x}, (\vec{b}-\vec{y}) \rangle+f(\vec{y}, \vec{b}-\vec{y})\colon z, z'\in Z, \vec{x}\in X', \vec{y}\in Y' \right\rbrace=\mathbb{F}_p\]
under the condition $|Z|^2|X'||Y'|> 2p^{n+2}$.

To prove this claim, let $\lambda$ be an arbitrary element in $\mathbb{F}_p$, we define two sets in the sum-product graph $SP_{\FF_p, n}$, $E\subseteq \mathbf{X}$ and $F\subseteq \mathbf{Y}$ as follows:
\[E=X'\times (-Z+\lambda), ~F=\left\lbrace (\vec{b}-\vec{y},  -z-f(\vec{y}, \vec{b}-\vec{y}))\colon z\in Z, \vec{y}\in Y'\right\rbrace.\]
It is clear that $|E|=|Z||X'|$ and $|F|=|Z||Y'|$. It follows from Lemma \ref{edge} and Lemma \ref{sp-graph-lemma} that if $|Z|^2|X'||Y'|>2p^{n+2}$, then $e(E,F) > 0$. It follows that there exist $\vec{x}\in X', \vec{y}\in Y'$, and $z, z'\in Z$ such that 
\[z+z'+ \langle \vec{x}, (\vec{b}-\vec{y}) \rangle +f(\vec{y}, \vec{b}-\vec{y})=\lambda.\]
Since $\lambda$ is chosen arbitrarily, we have 
\[\left\lbrace z+z'+ \langle \vec{x}, (\vec{b}-\vec{y}) \rangle+f(\vec{y}, \vec{b}-\vec{y})\colon z, z'\in Z, \vec{x}\in X', \vec{y}\in Y' \right\rbrace=\mathbb{F}_p. \qedhere\]
\end{proof}

\paragraph{Proof of Theorem \ref{thm1}.}
We follow the method of \cite[Theorem 1.3]{HH}. First we note that if $|Z|>p/2$, then we have $Z+Z=\mathbb{F}_p$. This implies that 
\[B\cdot B =[2\vec{X}, 2\vec{Y}, \mathbb{F}_p ].\]
Therefore, $B\cdot B$ contains at least $|B\cdot B|/p\ge |B|/p$ cosets of the subgroup $[\vec{0}, \vec{0}, \mathbb{F}_p]$. Thus, in the rest of the proof, we may assume that $|Z|\le p/2$. 

For $1\le i\le n$, we have
\[
\sum_{a_i\in\mathbb{F}_p}|X_i\cap(a_i-X_i)|=|X_i|^2, \quad
\sum_{b_i\in\mathbb{F}_p}|Y_i\cap(b_i-Y_i)|=|Y_i|^2,
\]
which implies that
\[
\prod_{i=1}^n\left(\sum_{a_i\in\mathbb{F}_p}|X_i\cap(a_i-X_i)|\right)
\left(\sum_{b_i\in\mathbb{F}_p}|Y_i\cap(b_i-Y_i)|\right)
=\prod_{i=1}^n|X_i|^2|Y_i|^2.
\]
Therefore we obtain
\begin{equation}\label{eq1}
\sum_{\vec{a},\vec{b}\in\mathbb{F}_p^n}
\prod_{i=1}^n|X_i\cap(a_i-X_i)||Y_i\cap(b_i-Y_i)|
=\prod_{i=1}^n|X_i|^2|Y_i|^2.
\end{equation}
Let $N$ be the number of pairs $(\vec{a},\vec{b})\in\mathbb{F}_p^n\times \mathbb{F}_p^n$ such that
\[
|Z|^2\prod_{i=1}^n|X_i\cap(a_i-X_i)||Y_i\cap(b_i-Y_i)|>2p^{n+2}.
\]
It follows from Lemma \ref{lm1} that $[\vec{a}, \vec{b}, \mathbb{F}_p]\subseteq B\cdot B$ for such pairs $(\vec{a}, \vec{b})$. Then by equation \eqref{eq1} 
\[
\left(\prod_{i=1}^n|X_i||Y_i|\right)N+
2p^{n+2}(p^{2n}-N)>
\left(\prod_{i=1}^n|X_i||Y_i|\right)^2,\]
and so \[
N>\frac{\prod_{i=1}^n|X_i|^2|Y_i|^2-2p^{3n+2}}{\prod_{i=1}^n|X_i||Y_i|-2p^{n+2}}.\]

By the assumption of Theorem \ref{thm1}, we have
\begin{equation}\label{eq3}
|B|=|Z|\left(\prod_{i=1}^n|X_i||Y_i|\right)>|M_n|^{3/4+\varepsilon}=p^{3n/2+3/4+\varepsilon(2n+1)}.
\end{equation}
Thus when $n>1/\epsilon$, we have 
\[
\prod_{i=1}^n|X_i||Y_i|>p^{3n/2+7/4},
\]
since $|Z|\le p$.

In other words, 
\[N\ge (1-2p^{-3/2})\prod_{i=1}^n|X_i||Y_i|= (1-2p^{-3/2})\frac{|B|}{|Z|} \ge \frac{|B|}{p},\]
since $|Z|\le p/2$. \hfill $\qed$

\section{Proof of Theorem \ref{thm3}}
\begin{lemma}\label{lm3}
Let $Q$ be a quasifield of order $q$ and let $[\vec{X}, \vec{Y}, Z] = B\subseteq H_n(Q)$ be a brick. For a given $\vec{a} = (a_1,\dots, a_n)$, $\vec{b} = (b_1,\dots, b_n) \in Q^n$, suppose that 
\[
|Z|^2 \prod_{i=1}^n |X_i \cap (a_i - X_i)||Y_i \cap (b_i - Y_i) | > 2q^{n+2},
\]
then we have 
\[
B\cdot B \supseteq [\vec{a}, \vec{b}, Q].
\]
\end{lemma}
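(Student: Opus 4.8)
The plan is to mirror the proof of Lemma \ref{lm1}, which becomes genuinely simpler here because the multiplication in $H_n(Q)$ carries no correction term $f$. First I would set $X_i' = X_i \cap (a_i - X_i)$, $Y_i' = Y_i \cap (b_i - Y_i)$, and write $X' = X_1' \times \dots \times X_n'$, $Y' = Y_1' \times \dots \times Y_n'$. Since $(Q,+)$ is a group, for any $\vec{x} \in X'$ and $\vec{y} \in Y'$ both $\vec{x}$ and $\vec{a} - \vec{x}$ have their $i$th coordinate in $X_i$, and likewise for $\vec{y}$ and $\vec{b} - \vec{y}$; hence both $[\vec{x}, \vec{y}, z]$ and $[\vec{a} - \vec{x}, \vec{b} - \vec{y}, z']$ lie in $B$ for all $z, z' \in Z$. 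This gives the inclusion
\[
B \cdot B \supseteq \{ [\vec{x}, \vec{y}, z] \cdot [\vec{a} - \vec{x}, \vec{b} - \vec{y}, z'] : \vec{x} \in X',\ \vec{y} \in Y',\ z, z' \in Z \}.
\]

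Next I would apply the multiplication rule of $H_n(Q)$ directly. Because the first two coordinates add to $\vec{a}$ and $\vec{b}$ respectively, the product equals $[\vec{a}, \vec{b}, z + z' + \langle \vec{x}, \vec{b} - \vec{y} \rangle]$, with no $f$-term. Thus it suffices to prove that
\[
\{ z + z' + \langle \vec{x}, \vec{b} - \vec{y} \rangle : z, z' \in Z,\ \vec{x} \in X',\ \vec{y} \in Y' \} = Q
\]
under the hypothesis $|Z|^2 |X'| |Y'| > 2q^{n+2}$, which would force $[\vec{a}, \vec{b}, Q] \subseteq B \cdot B$.

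To establish the claim I would fix an arbitrary $\lambda \in Q$ and pass to the sum-product graph $SP_{Q,n}$, defining
\[
E = X' \times (-Z + \lambda) \subseteq \mathbf{X}, \qquad F = \{ (\vec{b} - \vec{y}, -z) : z \in Z,\ \vec{y} \in Y' \} \subseteq \mathbf{Y}.
\]
Since negation and translation are bijections on the group $(Q,+)$, one checks $|E| = |Z| |X'|$ and $|F| = |Z| |Y'|$, and an edge between $(\vec{x}, \lambda - z_1) \in E$ and $(\vec{b} - \vec{y}, -z_2) \in F$ occurs precisely when $\langle \vec{x}, \vec{b} - \vec{y} \rangle = (\lambda - z_1) + (-z_2)$, i.e.\ when $z_1 + z_2 + \langle \vec{x}, \vec{b} - \vec{y} \rangle = \lambda$. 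Feeding the eigenvalue bound $\lambda_2 \le 2^{1/2} q^{n/2}$ of Lemma \ref{sp-graph-lemma} into the expander mixing estimate of Lemma \ref{edge}, the hypothesis $|Z|^2 |X'| |Y'| > 2 q^{n+2}$ makes the main term $q^n |E| |F| / q^{n+1}$ strictly exceed the error term $\lambda_2 \sqrt{|E||F|}$, so $e(E, F) > 0$ and an edge realizing $\lambda$ exists; as $\lambda$ was arbitrary, the third coordinates sweep out all of $Q$.

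The main obstacle—really the only nonroutine point—is confirming that the sum-product machinery survives the passage from $\FF_p$ to a general quasifield. This is exactly where Lemma \ref{sp-graph-lemma} does its work: although $*$ may be neither commutative nor associative, the graph's adjacency relation only invokes $(Q,+)$ through the sums $z + z'$ and the form $\langle \vec{x}, \vec{y} \rangle$, and the spectral gap is already guaranteed for any quasifield of order $q$. Consequently the entire argument is bookkeeping with $q$ in place of $p$ and with the $f$-term deleted, and no further structural input about $Q$ is needed.
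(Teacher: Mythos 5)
Your proposal is correct and follows essentially the same route as the paper: the paper's proof of Lemma \ref{lm3} likewise mirrors Lemma \ref{lm1} with the $f$-term deleted, using the identical sets $E = X' \times (-Z+\lambda)$ and $F = \{(\vec{b}-\vec{y}, -z)\}$ in $SP_{Q,n}$ together with Lemmas \ref{edge} and \ref{sp-graph-lemma}. You have merely written out the details the paper leaves to the reader, and your verification of the edge condition and the threshold $|Z|^2|X'||Y'| > 2q^{n+2}$ is accurate.
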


\begin{proof}
The proof is similar to that of Lemma \ref{lm1}, so we leave some details to the reader. Let
\[
X' = (X_1\cap (a_1 - X_1), \dots, X_n \cap (a_n-X_n)), ~Y' = (Y_1 \cap (b_1 - Y_1),\dots, Y_n \cap (b_n - Y_n)) 
\]
and $E\subseteq \mathbf{X}$, $F \subseteq \mathbf{Y}$ in $SP_{Q,n}$ where 
\[
E = X' \times (-Z + \lambda), ~F=\left\lbrace (\vec{b}-\vec{y},  -z)\colon z\in Z, \vec{y}\in Y'\right\rbrace,
\]
and $\lambda\in Q$ is arbitrary. Then $e(E,F) > 0$ which implies that there exist $\vec{x} \in X'$, $\vec{y} \in Y'$, and $z,z'\in Z$ such that 
\[
z+z' +\langle \vec{x}, (\vec{b} - \vec{y})  \rangle = \lambda.
\]
This implies that 
\[
[\vec{a}, \vec{b}, Q] \subseteq B\cdot B. \qedhere
\]
\end{proof}

The rest of the proof of Theorem \ref{thm3} is identical to that of Theorem \ref{thm1}. We need only to show that if $Z\subseteq Q$ and $|Z| > |Q|/2$, then $Z+Z = Q$. However, this follows since the additive structure of $Q$ is a group.

\section*{Appendix}

\begin{proof}[Proof of Lemma \ref{sp-graph-lemma}]
Let $Q$ be a finite quasifield of order $q$ and let $SP_{Q,n}$ be the bipartite graph with partite sets $\mathbf{X} = \mathbf{Y} = Q^n\times Q$ where $(x_1,\dots, x_n, z_x) \sim (y_1,\dots , y_n, z_y)$ if and only if
\begin{equation}\label{sp adjacency}
z_x+ z_y = x_1 * y_1 + \dots + x_n * y_n.
\end{equation}

First we show that $SP_{Q,n}$ is $q^n$ regular. Let $(x_1,\dots, x_n, z_x)$ be an arbitrary element of $\mathbf{X}$. Choose $y_1,\dots, y_n \in Q$ arbitrarily. Then there is a unique choice for $z_y$ that makes \eqref{sp adjacency} hold, and so the degree of $(x_1,\dots, x_n, z_x)$ is $q^n$. A similar argument shows the degree of each vertex in $\mathbf{Y}$ is $q^n$.

Next we show that $\lambda_2$ is small. Let $M$ be the adjacency matrix for $SP_{Q,n}$ where the first $q^{n+1}$ rows and columns are indexed by $\mathbf{X}$. We can write 
\[
M = \begin{pmatrix} 0 & N \\ N^T & 0 \end{pmatrix}
\]
where $N$ is the $q^{n+1} \times q^{n+1}$ matrix whose $(x_1 , \dots , x_{n}, x_z)_X \times (y_1 , \dots , y_{n},y_z)_Y$ entry is $1$ if \eqref{sp adjacency} holds and $0$ otherwise. 

The matrix $M^2$ counts the number of walks of length $2$ between vertices. Since $SP_{Q,n}$ is $q^n$ regular, the diagonal entries of $M^2$ are all $q^n$. Since $SP_{Q,n}$ is bipartite, there are no walks of length $2$ from a vertex in $\mathbf{X}$ to a vertex in $\mathbf{Y}$. Now let $x=(x_1,\dots, x_n, x_z)$ and $x'=(x_1',\dots, x_n', x_z')$ be two distinct vertices in $\mathbf{X}$. To count the walks of length $2$ between them is equivalent to counting their common neighbors in $\mathbf{Y}$. That is, we must count solutions $(y_1,\dots, y_n, z_y)$ to the system of equations 
\begin{equation}\label{adjacency 1}
x_z +y_z = x_1 * y_1 + \dots +x_n * y_n
\end{equation}
and
\begin{equation}\label{adjacency 2}
x_z' + y_z = x_1'  * y_1 + \dots + x_n'  * y_n.
\end{equation}

\noindent {\em Case 1: For $i\leq 1 \leq n$ we have $x_i = x_i'$}: In this case we must have $x_z \not = x_z'$. Subtracting \eqref{adjacency 1} from \eqref{adjacency 2} shows that the system has no solutions and so $x$ and $x'$ have no common neighbors.

\noindent {\em Case 2: There is an $i$ such that $x_i \not=x_i'$}: Subtracting \eqref{adjacency 2} from \eqref{adjacency 1} gives 
\begin{equation}\label{common neighbors}
x_z - x_z' = x_1 * y_1 + \dots + x_n  *y_n -x_1' *y_1 - \dots - x_n' *y_n.
\end{equation}
There are $q^{n-1}$ choices for $y_1,\dots, y_{i-1}, y_{i+1}, \dots y_n$. Since $x_i - x_i' \not=0$, these choices determine $y_i$ uniquely, which then determines $y_z$ uniquely. Therefore, in this case $x$ and $x'$ have exactly $q^{n-1}$ common neighbors.

A similar argument shows that for $y = (y_1,\dots, y_n, y_z)$ and $y' = (y_1',\dots, y_n', y_z')$, then either $y$ and $y'$ have either no common neighbors or exactly $q^{n-1}$ common neighbors.

Now let $H$ be the graph whose vertex set is $\mathbf{X}\cup\mathbf{Y}$ and two vertices are adjacent if and only if they are either both in $\mathbf{X}$ or both in $\mathbf{Y}$, and they have no common neighbors. For this to occur, we must be in Case 1, and therefore we must have either $x_z \not=x_z'$ or $y_z\not=y_z'$ and all of the other coordinates equal. Therefore, this graph is $q-1$ regular, as for each fixed vertex there are exactly $q-1$ vertices with a different last coordinate and the same entries on the first $n$ coordinates. Let $E$ be the adjacency matrix of $H$ and note that since $H$ is $q-1$ regular, all of the eigenvalues of $E$ are at most $q-1$ in absolute value. Let $J$ be the $q^{n+1}$ by $q^{n+1}$ all ones matrix. By the above case analysis, it follows that 
\begin{equation}\label{matrix squared}
M^2 = q^{n- 1} \begin{pmatrix} J & 0  \\ 0 & J \end{pmatrix}  + (q^n - q^{n-1} ) I - q^{n-1} E
\end{equation}

Now let $v_2$ be an eigenvector of $M$ for $\lambda_2$. For a set of vertices $Z$ let $\chi_Z$ denote the vector which is $1$ if a vertex is in $Z$ and $0$ otherwise (ie it is the characteristic vector for $Z$). Note that since $SP_{Q,n}$ is a regular bipartite graph, we have that $\lambda_1 = q^n$ with corresponding eigenvector $\chi_\mathbf{X} + \chi_{\mathbf{Y}}$ and $\lambda_n = -q^n$ with corresponding eigenvector $\chi_\mathbf{X} - \chi_{\mathbf{Y}}$. Also note that $v_2$ is perpendicular to both of these eigenvectors and therefore is also perpendicular to both $\chi_{\mathbf{X}}$ and $\chi_\mathbf{Y}$. This implies that
\[
\begin{pmatrix} J& 0 \\ 0 & J\end{pmatrix} v_2 = 0.
\]
Now by \eqref{matrix squared}, we have 
\[
\lambda_2^2 v_2 = (q^n - q^{n-1})v_2- q^{n-1}Ev_2.
\]
Therefore $q-1-\frac{\lambda_2^2}{q^{n-1}}$ is an eigenvalue of $E$ and is therefore at most $q-1$ in absolute value, implying that $\lambda_2 \leq 2^{1/2}q^{n/2}$. 

\end{proof}

\begin{thebibliography}{99}
\bibitem{as}
N.\ Alon and J.\ H.\ Spencer, \textit{The Probabilistic Method}, 2nd ed., Wiley-Interscience, 2000.
\bibitem{A}
M.\ Aschbacher,  \textit{Finite Group Theory}, Vol. 10. Cambridge University Press, 2000.
\bibitem{bh} A.\ Brouwer and W.\ Haemers, \textit{Spectra of Graphs}, Springer, New York, etc., 2012. 
\bibitem{D}
P.\ Diaconis, Threads through group theory, \textit{Character Theory of Finite groups}, \textit{Contemporary Mathematics}, \textbf{524} (2010): 33--47.

\bibitem{F} G.\ A.\ Freiman, Addition of finite sets, {\em Sov.\ Math.\, Dokl.} {\bf 5} (1964) 1366-1370.

\bibitem{G}
D.\ Gorenstein,  \textit{Finite Groups}, Vol. 301. American Mathematical Soc., 2007.

\bibitem{GR} B.\ Green and I.\ Ruzsa, Freiman's theorem in an arbitrary abelian group, {\em J.\ Lond.\ Math.\ Soc.} {\bf 75} (2007), 163-175.

\bibitem{HH2} N\ Hegyv\'ari and F.\ Hennecart, A note on Freiman models in Heisenberg groups, {\em Israel J.\ of Math.} {\bf 189} (2012), 397-411.

\bibitem{HH}
N.\ Hegyv\'{a}ri and F.\ Hennecart, A structure result for bricks in Heisenberg groups, \textit{Journal of Number Theory} \textbf{133}(9) (2013): 2999--3006.

\bibitem{PTTV} T.\ Pham, M.\ Tait, C.\ Timmons, and L.\ A.\ Vinh, A Szemer\'edi-Trotter type theorem, sum-product estimates in finite quasifields, and related results, {\em J.\ Combin. Theory Ser. A} {\bf 147} (2017) 55-74.

\bibitem{vinhajm}
L.\ A.\ Vinh, The solvability of norm, bilinear and quadratic equations over finite fields via spectra of graphs, \textit{Forum Mathematicum}, Vol. \textbf{26} (2014), No. 1, pp. 141--175.
\end{thebibliography}
\end{document}